\documentclass[12pt,draft]{amsart}

\address{Department of Mathematics\\Boyd Graduate Studies Research Center\\University of Georgia\\Athens, GA 30602\\USA}
\author{Emily Jennings}
\email{emily@math.uga.edu}

\author{Paul Pollack}
\email{pollack@uga.edu}

\author{Lola Thompson}
\email{lola@math.uga.edu}


\title[Variations on a theorem of Davenport]{Variations on a theorem of Davenport concerning abundant numbers}
\usepackage{amsmath,amssymb,amsthm,url}
\usepackage{geometry}
\geometry{left=1.2in, right=1.2in, top=.72in, bottom=.72in}

\DeclareMathAlphabet{\curly}{U}{rsfs}{m}{n}

\newtheorem{thm}{Theorem}[section]
\newtheorem{cor}[thm]{Corollary}

\newtheorem{prop}[thm]{Proposition}
\newtheorem{lem}[thm]{Lemma}
\theoremstyle{definition}

\theoremstyle{remark}
\newtheorem*{ex}{Examples}
\numberwithin{equation}{section}
\begin{document}
\renewcommand{\labelenumi}{(\roman{enumi})}
\def\d{\mathrm{d}}
\def\i{\mathrm{i}}
\def\e{\mathrm{e}}
\def\C{\mathbf{C}}
\def\D{\curly{D}}
\def\A{\curly{A}}
\def\Pp{\curly{P}}
\def\Prob{\mathbf{P}}
\def\E{\mathbf{E}}
\def\R{\mathbf{R}}
\def\M{\mathfrak{M}}
\def\N{\mathbf{N}}
\def\Q{\mathbf{Q}}
\def\Z{\mathbf{Z}}
\def\Ss{\mathcal{S}}
\def\1{\mathbf{1}}
\def\tD{\tilde{D}}
\def\lcm{\mathop{\operatorname{lcm}}}
\newcommand{\leg}[2]{\genfrac{(}{)}{}{}{#1}{#2}}
\makeatletter
\def\moverlay{\mathpalette\mov@rlay}
\def\mov@rlay#1#2{\leavevmode\vtop{%
   \baselineskip\z@skip \lineskiplimit-\maxdimen
   \ialign{\hfil$\m@th#1##$\hfil\cr#2\crcr}}}
\newcommand{\charfusion}[3][\mathord]{
    #1{\ifx#1\mathop\vphantom{#2}\fi
        \mathpalette\mov@rlay{#2\cr#3}
      }
    \ifx#1\mathop\expandafter\displaylimits\fi}
\makeatother

\makeatletter
\let\@@pmod\pmod
\DeclareRobustCommand{\pmod}{\@ifstar\@pmods\@@pmod}
\def\@pmods#1{\mkern4mu({\operator@font mod}\mkern 6mu#1)}
\makeatother

\newcommand{\cupdot}{\charfusion[\mathbin]{\cup}{\cdot}}
\newcommand{\bigcupdot}{\charfusion[\mathop]{\bigcup}{\cdot}}
\subjclass[2000]{Primary: 11N60, Secondary: 11A25, 11N37}
\begin{abstract} Let $\sigma(n) = \sum_{d \mid n}d$ be the usual sum-of-divisors function. In 1933, Davenport  showed that that $n/\sigma(n)$ possesses a continuous distribution function. In other words, the limit $D(u):= \lim_{x\to\infty} \frac{1}{x}\sum_{n \leq x,~n/\sigma(n) \leq u} 1$  exists for all $u \in [0,1]$ and varies continuously with $u$.
We study the behavior of the sums $\sum_{n \leq x,~n/\sigma(n) \leq u} f(n)$ for certain complex-valued multiplicative functions $f$. Our results cover many of the more frequently encountered functions, including $\varphi(n)$, $\tau(n)$, and $\mu(n)$. They also apply to the representation function for sums of two squares, yielding the following analogue of Davenport's result: For all $u \in [0,1]$, the limit
\[ \tilde{D}(u):= \lim_{R\to\infty} \frac{1}{\pi R}\#\{(x,y) \in \Z^2: 0<x^2+y^2 \leq R \text{ and } \frac{x^2+y^2}{\sigma(x^2+y^2)} \leq u\} \]
exists, and $\tilde{D}(u)$ is both continuous and strictly increasing on $[0,1]$.
\end{abstract}
\maketitle

\section{Introduction}
Recall that a natural number $n$ is said to be \emph{abundant} if $\sigma(n) > 2n$, where $\sigma(n):=\sum_{d\mid n}d$ denotes the usual sum-of-divisors function. Answering a question of Bessel-Hagen, Davenport \cite{davenport33} showed that the set of abundant numbers possesses an asymptotic density. In fact, he proved the more precise result that $n/\sigma(n)$ possesses a continuous distribution function. In other words, the limit
\begin{equation}\label{eq:davenportexpression} D(u):= \lim_{x\to\infty} \frac{1}{x}\sum_{\substack{n \leq x \\ n/\sigma(n) \leq u}} 1 \end{equation}
exists for all $u \in [0,1]$ and varies continuously with $u$. We have followed modern conventions in writing the condition on $n/\sigma(n)$ as a non-strict inequality, but since $D(u)$ is continuous, whether or not we allow $n/\sigma(n)=u$ does not change the value of $D(u)$. Recent work of Kobayashi \cite{kobayashi10} (see also \cite{kobayashi13}) shows that $0.24761 < D(\frac12) < 0.24765$, so that just under $1$ in $4$ numbers are abundant.

The purpose of this paper is to establish analogues of Davenport's theorem where the uninteresting summand $1$ appearing in \eqref{eq:davenportexpression} is replaced with $f(n)$ for certain complex-valued multiplicative functions $f$. We prove two theorems in this direction, the first of which is as follows. Recall that an arithmetic function $f$ is said to possess a \emph{mean value} if $\frac{1}{x}\sum_{n\le x}f(n)$ approaches a (complex number) limit as $x\to\infty$.

\begin{thm}\label{thm:smallmult} Let $f$ be a multiplicative function that is bounded in mean square, i.e.,
	\[ \limsup_{x\to\infty} \frac{1}{x}\sum_{n \leq x}|f(n)|^2 < \infty. \]
	Suppose that for every nonnegative integer $k$, the function $n\mapsto f(n) \left(n/\sigma(n)\right)^k$ possesses a mean value. Then for every real $u \in [0,1]$, the limit
	\begin{equation}\label{eq:dfdef} D_f(u):= \lim_{x\to\infty} \frac{1}{x}\sum_{\substack{n \leq x \\ n/\sigma(n)\leq u}} f(n) \end{equation}
	exists. Moreover, $D_f(u)$ is continuous as a function of $u$.
\end{thm}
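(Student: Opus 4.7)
The plan is to deduce the existence and continuity of $D_f(u)$ from Davenport's theorem (the case $f\equiv 1$) by combining polynomial/Weierstrass approximation with the mean-square hypothesis via Cauchy--Schwarz.

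First I would unpack the hypothesis: by linearity,
\[ M_f(P) := \lim_{x\to\infty} \frac{1}{x}\sum_{n\le x} f(n)\, P(n/\sigma(n)) \]
exists for every polynomial $P\in\C[t]$. Cauchy--Schwarz applied to the mean-square bound gives $\frac{1}{x}\sum_{n\le x}|f(n)| = O(1)$, and uniform (Weierstrass) approximation on $[0,1]$ then promotes this to the statement that $M_f(g)$ exists for every continuous $g\colon[0,1]\to\C$. The same reasoning applied to the constant function $1$ in place of $f$ reproduces $\int_0^1 g(t)\,\mathrm{d}D(t)$ as the limit of $\frac{1}{x}\sum_{n\le x} g(n/\sigma(n))$.

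Next, fix $u\in[0,1]$ and $\epsilon>0$. Using continuity of Davenport's $D$, I would sandwich the indicator between two continuous functions $g_\epsilon^-\le \mathbf{1}_{[0,u]}\le g_\epsilon^+$ with values in $[0,1]$ and $\int_0^1 (g_\epsilon^+ - g_\epsilon^-)\,\mathrm{d}D < \epsilon$ (say, narrow continuous bumps around $u$). Writing $\Delta = g_\epsilon^+-g_\epsilon^-\in[0,1]$, Cauchy--Schwarz together with $\Delta^2\le\Delta$ yields
\[ \left|\frac{1}{x}\sum_{n\le x} f(n)\bigl(\mathbf{1}_{n/\sigma(n)\le u} - g_\epsilon^-(n/\sigma(n))\bigr)\right|^2 \le \frac{1}{x}\sum_{n\le x}|f(n)|^2 \cdot \frac{1}{x}\sum_{n\le x}\Delta(n/\sigma(n)). \]
The first factor is $O(1)$, and the second tends to $\int_0^1 \Delta\,\mathrm{d}D<\epsilon$ by the extended $f\equiv 1$ case. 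Consequently the limsup and liminf as $x\to\infty$ of the partial sums defining $D_f(u)$ differ by $O(\epsilon^{1/2})$; letting $\epsilon\to 0$ forces them to coincide, giving the existence of $D_f(u)$.

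Finally I would prove continuity of $D_f$ at any $u_0$ by applying the same Cauchy--Schwarz argument to the interval $(u,u']$, obtaining
\[ |D_f(u')-D_f(u)|^2 \le \limsup_{x\to\infty}\frac{1}{x}\sum_{n\le x}|f(n)|^2 \cdot \bigl(D(u')-D(u)\bigr), \]
and invoking continuity of $D$ as $u'\to u$. The chief obstacle is that $f$ is complex-valued, so a direct monotone sandwich is unavailable; Cauchy--Schwarz paired with the mean-square hypothesis is precisely what converts the sign/phase issue into an $L^2$--$L^2$ estimate against the already-understood $f\equiv 1$ distribution.
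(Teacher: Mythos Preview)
Your proposal is correct and follows essentially the same route as the paper: first extend the mean-value hypothesis from polynomials to all continuous test functions via Weierstrass approximation and Cauchy--Schwarz against the mean-square bound (this is the paper's Lemma~2.1), then approximate $\mathbf{1}_{[0,u]}$ by continuous functions and use Cauchy--Schwarz together with Davenport's continuous $D$ to control the error, obtaining both existence of $D_f(u)$ and the inequality $|D_f(u')-D_f(u)|^2 \ll_f D(u')-D(u)$ for continuity. The only cosmetic difference is that the paper uses a one-sided sequence of piecewise-linear approximants $\psi_m$ and shows the resulting means form a Cauchy sequence, whereas you use a two-sided sandwich $g_\epsilon^-\le \mathbf{1}_{[0,u]}\le g_\epsilon^+$ and argue $\limsup=\liminf$ directly; the underlying ideas and estimates are identical.
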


Theorem \ref{thm:smallmult} is proved in \S\ref{sec:smallmult}. In the same section, we obtain the following consequences. From now on, let $p$ be a prime variable.

\begin{cor}\label{cor:usual} Let $f$ be a multiplicative function bounded in mean square. Then the hypotheses of Theorem \ref{thm:smallmult}, and hence also its conclusion, hold if
\begin{equation}\label{eq:smallprod} \sum_{p} \frac{|f(p)-1|}{p} < \infty \quad\text{and}\quad \sum_{p} \sum_{j\geq 2} \frac{|f(p^j)|}{p^j} < \infty. \end{equation}
If $|f(n)| \leq 1$ for all $n \in \N$, then  \eqref{eq:smallprod} can be replaced with the weaker assumption that the series
\begin{equation}\label{eq:delange} \sum_{p} \frac{f(p)-1}{p} \end{equation}
converges (possibly conditionally).
\end{cor}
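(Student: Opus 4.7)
The plan is to verify hypothesis (ii) of Theorem \ref{thm:smallmult}, namely that for every nonnegative integer $k$ the multiplicative function $g_k(n) := f(n)\lambda_k(n)$ possesses a mean value, where $\lambda_k(n) := (n/\sigma(n))^k$. Since $\lambda_k$ is itself multiplicative with $0 \leq \lambda_k(n) \leq 1$, $g_k$ is multiplicative, and $|g_k(n)|^2 \leq |f(n)|^2$ ensures that $g_k$ is bounded in mean square whenever $f$ is. My strategy is to transfer the prime-sum hypotheses \eqref{eq:smallprod} (respectively the weaker \eqref{eq:delange}) from $f$ to $g_k$, and then invoke a standard mean-value theorem for multiplicative functions---a Wintner-type result in the first case, Delange's theorem in the second.

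For the first case, I would decompose on primes as
\[ g_k(p) - 1 = (f(p) - 1) + f(p)\bigl(\lambda_k(p) - 1\bigr). \]
Since $\lambda_k(p) = (1 - 1/(p+1))^k$, Bernoulli's inequality gives $|\lambda_k(p) - 1| \leq k/(p+1)$. Bounding $|f(p)| \leq |f(p)-1| + 1$ then yields
\[ \sum_p \frac{|f(p)|\,|\lambda_k(p) - 1|}{p} \leq k \sum_p \frac{|f(p)-1|+1}{p(p+1)} < \infty, \]
using $\sum_p |f(p)-1|/p^2 \leq \sum_p |f(p)-1|/p < \infty$ together with $\sum_p 1/p^2 < \infty$. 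Combined with the hypothesis $\sum_p |f(p)-1|/p < \infty$, this shows that the first sum in \eqref{eq:smallprod} converges for $g_k$; and $|\lambda_k(p^j)| \leq 1$ immediately gives $\sum_p \sum_{j \geq 2} |g_k(p^j)|/p^j \leq \sum_p \sum_{j \geq 2} |f(p^j)|/p^j < \infty$. Thus $g_k$ also satisfies \eqref{eq:smallprod}, and a Wintner-style mean-value theorem---most cleanly obtained by writing $g_k = 1 \ast h_k$ with multiplicative $h_k$, checking $\sum_n |h_k(n)|/n < \infty$, and then using Dirichlet's hyperbola method---produces the mean value of $g_k$, namely the Euler product of $g_k$.

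For the second claim, $|f(n)| \leq 1$ forces $|g_k(n)| \leq 1$, and the same decomposition gives
\[ \sum_p \frac{g_k(p) - 1}{p} = \sum_p \frac{f(p)-1}{p} + \sum_p \frac{f(p)\bigl(\lambda_k(p) - 1\bigr)}{p}. \]
The first series converges by \eqref{eq:delange}, while the second is dominated termwise by $k/p^2$ (using $|f(p)| \leq 1$) and thus converges absolutely. Delange's theorem on mean values of bounded multiplicative functions then yields the mean value of $g_k$.

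The main obstacle I anticipate is not the prime-sum estimates---which are entirely routine---but rather the choice of mean-value theorem that accommodates the \emph{bounded in mean square} setting of the first part in place of pointwise boundedness. The elementary Wintner route via $g_k = 1 \ast h_k$ and absolute convergence of $\sum_n h_k(n)/n$ sidesteps this issue entirely, since it extracts the mean value from an $\ell^1$-control of the convolution factor rather than from any $L^\infty$-hypothesis on $g_k$; verifying the necessary multiplicative $\ell^1$-condition on $h_k$ reduces to precisely the two prime sums in \eqref{eq:smallprod} that we have already controlled.
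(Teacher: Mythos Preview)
Your proposal is correct and follows essentially the same route as the paper: define $g_k(n)=f(n)(n/\sigma(n))^k$, show that the prime-sum conditions \eqref{eq:smallprod} (resp.\ \eqref{eq:delange}) transfer from $f$ to $g_k$ via the decomposition $g_k(p)-1=(f(p)-1)+f(p)(\lambda_k(p)-1)$ together with $|\lambda_k(p)-1|=O_k(1/p)$, and then invoke Wintner's (resp.\ Delange's) mean-value theorem. The paper's proof is identical in substance, citing these two mean-value results as Propositions~\ref{prop:wintner} and~\ref{prop:delange}; your additional remarks about the convolution $g_k=1\ast h_k$ and the role of the mean-square hypothesis are reasonable elaborations but not departures from the paper's argument.
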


\begin{cor}\label{cor:meanzero} Let $f$ be a multiplicative function with $|f(n)| \leq 1$ for all natural numbers $n$. Suppose that $f$ has mean value zero. Suppose further that there is no real number $\beta$ with the property that $f(2^j)= -2^{\i j \beta}$ for every positive integer $j$. Then the function $D_f(u)$ defined in \eqref{eq:dfdef} vanishes identically for all $u \in [0,1]$.
\end{cor}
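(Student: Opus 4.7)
The plan is to verify the hypotheses of Theorem \ref{thm:smallmult} and simultaneously show that every ``moment''
\[ M_k := \lim_{x\to\infty} \frac{1}{x}\sum_{n\le x} f(n)\,(n/\sigma(n))^k \]
vanishes. Since Theorem \ref{thm:smallmult} will then furnish a continuous $D_f$, the vanishing of all moments will force $D_f \equiv 0$.

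The key tool is Hal\'asz's mean value theorem for multiplicative functions bounded by $1$. First I would show that the hypotheses on $f$ place it in the generic case of Hal\'asz, i.e.,
\[ \sum_{p} \frac{1 - \operatorname{Re}\,(f(p)\, p^{-\i \tau})}{p} = \infty \qquad \text{for every } \tau\in\R. \]
If the series converged for some $\tau_0$, then applying a Wirsing/Delange theorem to $\tilde f(n) := f(n)\, n^{-\i \tau_0}$ would give it the mean value $M(\tilde f) = \prod_p (1-1/p) \sum_{j\ge 0} \tilde f(p^j)/p^j$. Partial summation then shows $(1/x)\sum_{n\le x} f(n) \sim M(\tilde f)\,x^{\i\tau_0}/(1+\i\tau_0)$; for this to tend to $0$, one must have $M(\tilde f) = 0$, which in turn (since only the local factor at $p=2$ can vanish) forces $\tilde f(2^j) = -1$ for every $j\ge 1$, i.e., $f(2^j) = -2^{\i j\tau_0}$ for all $j$, contradicting the hypothesis. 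Next, setting $h_k(n) := f(n)(n/\sigma(n))^k$ and $r_p := (p/(p+1))^k$, one has
\[ 1 - \operatorname{Re}\,(h_k(p)\, p^{-\i\tau}) = (1-r_p) + r_p\,(1 - \operatorname{Re}\,(f(p)\, p^{-\i\tau})) \geq r_p\,(1 - \operatorname{Re}\,(f(p)\, p^{-\i\tau})). \]
Since $r_p \to 1$, the divergence transfers to $h_k$ for every $\tau\in\R$, and Hal\'asz's theorem yields $M(h_k) = 0$.

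With each $h_k$ having mean value zero, Theorem \ref{thm:smallmult} applies and $D_f$ exists and is continuous on $[0,1]$. The identity $(n/\sigma(n))^k = \int_0^1 k u^{k-1}\, \1_{u \le n/\sigma(n)}\, du$ combined with bounded convergence converts $M_k = 0$ into $\int_0^1 u^{k-1}\,D_f(u)\, du = 0$ for every $k\ge 1$; the Weierstrass approximation theorem applied to the continuous function $D_f$ then gives $D_f \equiv 0$. The main obstacle is the initial application of Hal\'asz's theorem to exclude every real $\tau$ simultaneously (a compactness-plus-monotonicity argument shows that pointwise divergence in $\tau$ upgrades to the uniform-on-compacts divergence needed in the Hal\'asz estimate); once that is done, the propagation to $h_k$ and the moment-theoretic closing step are routine.
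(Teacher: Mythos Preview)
Your overall strategy---show each $h_k(n)=f(n)(n/\sigma(n))^k$ has mean value zero via Hal\'asz, then deduce $D_f\equiv 0$---is correct, and the closing step is genuinely different from the paper's. The paper also shows that every $f_k$ has mean value zero, but then simply traces the zero through the proofs of Lemma~\ref{lem:continuous} and Theorem~\ref{thm:smallmult} (this is explicitly flagged as a ``corollary of the proof'' rather than of the result). Your moment/Weierstrass argument instead uses Theorem~\ref{thm:smallmult} as a black box, writing $M_k=-k\int_0^1 u^{k-1}D_f(u)\,du$ via bounded convergence and then killing $D_f$ by density of polynomials; this is a clean alternative that avoids looking inside the proof.

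Two places where you make life harder than necessary. First, your step~1 essentially reproves half of Hal\'asz's dichotomy. The paper simply invokes Proposition~\ref{prop:halasz}: since $f$ has mean value zero but alternative~(i) is ruled out by hypothesis, alternative~(ii) holds, i.e., the series \eqref{eq:hseries} diverges for every real $\beta$. Your route via ``Wirsing/Delange applied to $\tilde f(n)=f(n)n^{-\i\tau_0}$'' is delicate as written: Delange's theorem (Proposition~\ref{prop:delange}) requires convergence of $\sum_p(1-\tilde f(p))/p$, whereas you only have convergence of its real part; the imaginary part need not converge. What actually salvages this is precisely the full Hal\'asz theorem, so you may as well cite it directly. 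Second, your stated ``main obstacle''---upgrading pointwise divergence in $\tau$ to uniform-on-compacts---is a red herring. The qualitative Hal\'asz theorem (Proposition~\ref{prop:halasz}) only asks for pointwise divergence, and that is all you need to conclude $M(h_k)=0$ for each fixed $k$. Your inequality $1-\Re(h_k(p)p^{-\i\tau})\ge r_p\bigl(1-\Re(f(p)p^{-\i\tau})\bigr)$ is a nice way to propagate the divergence (the paper uses the equivalent observation $f_k(p)=f(p)+O(1/p)$), and no uniformity in $\tau$ is required.
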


\begin{ex}\mbox{ }
\begin{enumerate}
\item A simple example of a function satisfying the hypotheses of Corollary \ref{cor:usual} is the indicator function of the squarefree numbers (or more generally, the $\ell$-free numbers). The hypotheses of that result also hold for the functions $(\varphi(n)/n)^z$ and $(\sigma(n)/n)^z$, for any complex number $z$.  To obtain a result for $\varphi(n)$ or $\sigma(n)$, one can apply Corollary \ref{cor:usual} to $\varphi(n)/n$ or $\sigma(n)/n$, and then remove the weight of $1/n$ by partial summation. Indeed, whenever the conclusion of Theorem \ref{thm:smallmult} holds,  \[ \lim_{x\to\infty}\frac{1}{x^2}\sum_{\substack{n \leq x \\ n/\sigma(n)\leq u}} n f(n) = \frac{1}{2} D_{f}(u). \]

\item A natural family of examples satisfying the hypotheses of Corollary \ref{cor:meanzero} are the functions $\lambda_{a,q}(n):= \exp(2\pi \i\frac{a}{q}\Omega(n))$ with $q$ not dividing $a$. Here, as usual, $\Omega(n)$ denotes the number of prime factors of $n$ counted with multiplicity. That all of the functions $\lambda_{a,q}(n)$ have mean value zero seems to have been first proved by Pillai and Chowla \cite{pillai40} (alternatively, this assertion follows from a beautiful theorem of Hal\'asz, quoted in \S\ref{sec:smallmult}). The conclusion of Corollary \ref{cor:meanzero} for this family leads, via the orthogonality relations for additive characters, to the following pretty consequence:
\begin{quotation}
\emph{Fix $q\in \N$ and fix $0 < u \leq 1$. As $n$ ranges over the solutions to $n/\sigma(n)\leq u$, the values $\Omega(n)$ are equidistributed mod $q$.}
\end{quotation}
The nontrivial Dirichlet characters form another natural class of examples. Here the corresponding conclusion is:
\begin{quotation}
\emph{Fix $q\in \N$ and fix $0 < u \leq 1$. The solutions $n$ to $n/\sigma(n) \leq u$ that are relatively prime to $q$ are equidistributed among the coprime residue classes modulo $q$.}
\end{quotation}
Actually, for this deduction to be valid, one must know that a positive proportion of solutions
 to $n/\sigma(n)\leq u$ are coprime to $q$. This will follow from Theorem \ref{thm:nonneg} below. A different
  proof of this equidistribution result was indicated in \cite{pollack}.
\end{enumerate}
\end{ex}

For our second theorem, we restrict attention to nonnegative functions $f$ (assumed not to vanish identically). While Theorem \ref{thm:smallmult} applies perfectly well to many nonnegative $f$, for others it is simply not the right tool for the job. An illustrative example is provided by the divisor function $\tau$. The mean value of $\tau$ on the interval $[1,x]$ is asymptotic to $\log{x}$, as $x\to\infty$. Thus, to obtain the `correct' analogue of Davenport's theorem,  we should not be dividing by $x$ in \eqref{eq:dfdef} but rather by something proportional to $x\log{x}$. More generally, for a nonnegative function $f$, we ought to normalize by the factor
\[ S(f;x):= \sum_{n \leq x} f(n). \]
We are thus led to define
\[ \tD_{f}(u)= \lim_{x\to\infty} \frac{1}{S(f;x)} \sum_{\substack{n\leq x\\ n/\sigma(n)\leq u}} f(n), \]
whenever the limit exists. We can now state our second main result.

\begin{thm}\label{thm:nonneg} Suppose that $f$ is a nonnegative multiplicative function with the property that as $x\to\infty$,
\begin{equation}\label{eq:wcond}\sum_{p \leq x} f(p) \frac{\log{p}}{p} \sim \kappa \log{x} \end{equation}
for some $\kappa > 0$. Suppose also that $f(p)$ is bounded for primes $p$ and that \begin{equation}\label{eq:twoormore} \sum_{p}\sum_{j\geq 2} \frac{f(p^j)}{p^j} < \infty. \end{equation} If $\kappa \leq 1$, suppose further that \[ \sum_{p^j \leq x} f(p^j) \ll_{f} x/\log{x} \quad \text{\rm{(}for $x\geq 2$\rm{)}}.\]
Then $\tD_f(u)$ exists for all $u \in [0,1]$ and is both continuous and strictly increasing.
\end{thm}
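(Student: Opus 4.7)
\emph{Plan of proof.} The strategy rests on the method of moments, supplied by Wirsing's theorem on mean values of nonnegative multiplicative functions. For each integer $k\ge 0$, the function $f_k(n):=f(n)(n/\sigma(n))^k$ is a nonnegative multiplicative function inheriting the hypotheses of the theorem from $f$ with the same $\kappa$: since $(p/(p+1))^k=1+O_k(1/p)$, \eqref{eq:wcond} is preserved, and \eqref{eq:twoormore}, boundedness at primes, and the auxiliary $\kappa\le 1$ bound all pass via $f_k\le f$. Wirsing's theorem applied to $f$ and $f_k$ therefore gives
\[
\frac{S(f_k;x)}{S(f;x)}\longrightarrow\alpha_k:=\prod_p\frac{\sum_{j\ge 0}f(p^j)(p^j/\sigma(p^j))^k/p^j}{\sum_{j\ge 0}f(p^j)/p^j}.
\]
Setting $\Pp:=\{p:f(p)>0\}$ and letting $\mu_x$ denote the probability measure on $[0,1]$ assigning mass $f(n)/S(f;x)$ to $n/\sigma(n)$ for each $n\le x$, the ratio $S(f_k;x)/S(f;x)$ equals the $k$th moment of $\mu_x$. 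Since measures on the compact set $[0,1]$ are determined by their moments, the method of moments yields $\mu_x\Rightarrow\mu$ for the unique probability measure $\mu$ on $[0,1]$ with moments $(\alpha_k)$. Thus $\tD_f(u)=\mu([0,u])$ exists at every continuity point of $\mu$.

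For continuity, I would pass to the additive function $g(n):=\log(\sigma(n)/n)$, whose $f$-weighted limit distribution $\nu$ is the pushforward of $\mu$ under $u\mapsto-\log u$. Wirsing applied to $f\cdot(n/\sigma(n))^s$ for real $s\ge 0$ shows that $\nu$ is the infinite convolution of the discrete probability measures $\rho_p$, where $\rho_p$ assigns mass $(f(p^j)/p^j)/\sum_{i\ge 0}f(p^i)/p^i$ to the point $g(p^j)$. Each $\rho_p$ for $p\in\Pp$ is non-degenerate, having atoms at both $0$ and $g(p)=\log(1+1/p)>0$. Since $\sum_{p\in\Pp}f(p)/p=\infty$ by \eqref{eq:wcond} and the boundedness of $f(p)$ (via partial summation), a weighted form of the Erd\H{o}s--Wintner criterion for atomlessness of infinite convolutions of discrete measures forces $\nu$ to be atomless; consequently $\mu$ is atomless and $\tD_f$ is continuous on $[0,1]$.

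For strict monotonicity it suffices to show $\mathrm{supp}(\nu)=[0,\infty)$. The finite sums $\{\sum_{p\in F}g(p^{a_p}):F\subset\Pp\text{ finite},\,a_p\in\N\cup\{0\}\}$ are dense in $[0,\infty)$, since $g(p)\to 0$ while $\sum_{p\in\Pp}g(p)=\infty$. Given $v_0\ge 0$ and $\eta>0$, select such an $n_0$ with $f(n_0)>0$ and $|g(n_0)-v_0|<\eta/2$. Applying Wirsing to the sifted multiplicative function $f\cdot\1_{(\cdot,n_0)=1}$ gives a limit measure $\nu'$ for the coprime part $m=n/n_0$ of $n$; $\nu'$ again has a convolution structure, so the previous step's argument shows $\nu'$ is atomless. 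Combining with a direct argument that the sifted convolution has mass in every neighborhood of its essential infimum (using Wirsing-based second-moment bounds on the $\rho_p$'s), one obtains $0\in\mathrm{supp}(\nu')$, and hence the set of $n=n_0 m$ with $|g(n)-v_0|<\eta$ has positive $f$-weighted density, giving $\nu((v_0-\eta,v_0+\eta))>0$.

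The main obstacle I anticipate is the step showing $0\in\mathrm{supp}(\nu')$: atomlessness alone does not preclude a spectral gap at $0$, and the naive construction (restricting to $n$ with all prime factors exceeding a growing threshold $Y$) has vanishing $f$-weighted density once $Y$ is large enough to force $g(n)$ small. A careful quantitative analysis of the convolution structure, combining Wirsing's theorem with second-moment estimates on the individual $\rho_p$, is required to pin down the location of mass accumulation uniformly across the sifting against the divisors of $n_0$.
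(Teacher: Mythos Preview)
Your existence argument via the method of moments is a legitimate alternative to the paper's route. The paper instead computes the characteristic functions $\phi_x(t) = S(f;x)^{-1}\sum_{n \le x} f(n)(n/\sigma(n))^{\i t}$, applies Wirsing's theorem to $f(n)(n/\sigma(n))^{\i t}$ (same $\kappa$, same verification that hypotheses transfer), and invokes L\'evy's continuity theorem. Your route via integer moments is arguably cleaner, since determinacy of the moment problem on $[0,1]$ is elementary; the paper's route has the compensating advantage that the product formula for $\psi(t)$ already exhibits the convolution structure needed for continuity. Your continuity step is essentially the paper's: the paper quotes the same L\'evy/Jessen--Wintner criterion, observing that the maximal jump $d_p$ of $\rho_p$ equals $1/\Delta_p$ for large $p$ and that $\sum_p(1-d_p) \ge \tfrac12\sum_p f(p)/p = \infty$.

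The genuine gap is in strict monotonicity, and you have correctly located it---but you have overestimated its difficulty. The obstacle ``$0 \in \mathrm{supp}(\nu')$'' dissolves with a first-moment bound, no second moments needed: writing $\nu' = \nu'_N * \tau_N$ (split at primes $\le N$ and $> N$), the finite part $\nu'_N$ has an atom at $0$ of mass $\prod_{p \le N,\, p\nmid n_0} \Delta_p^{-1} > 0$, while the tail variable has expectation
\[
\sum_{p>N} \E[X_p] \;\ll\; \sum_{p>N}\Bigl(\frac{f(p)}{p^2} + \frac{\eta_p}{p}\Bigr) \;\longrightarrow\; 0 \qquad (N\to\infty),
\]
since $g(p^j) \le \log\tfrac{p}{p-1} \ll 1/p$ uniformly in $j$. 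Markov's inequality then gives $\tau_N([0,\eta/4)) \ge 1/2$ for $N$ large, hence $\nu'([0,\eta/2)) > 0$; there is no spectral gap at $0$. The paper sidesteps even this: having chosen a squarefree $m$ with $f(m)>0$ and $v < m/\sigma(m) \le u$, it bounds the contribution of $n=mq$ with $q$ squarefree, $y$-rough, and $q/\sigma(q) > v\,\sigma(m)/m$ from below via the pointwise linearization
\[
\1_{\{q/\sigma(q) > c\}} \;\ge\; 1 - c\,\frac{\sigma(q)}{q},
\]
which converts the constrained sum into a difference of two \emph{unconstrained} Wirsing-type sums over $y$-rough $q$. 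Taking $y$ large forces the correction factor $\prod_{p>y}\frac{1+f(p)(1+1/p)/p}{1+f(p)/p}$ arbitrarily close to $1$, so the difference is positive. This linearization trick is the idea your proposal is missing; with it (or with the Markov argument above) your proof closes.
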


\begin{ex}\mbox{ }
\begin{enumerate}
\item When $f=\tau$, the hypotheses of Theorem \ref{thm:nonneg} hold with $\kappa=2$.
\item Let $r(n)=\frac{1}{4}\#\{(x,y)\in \Z^2: x^2+y^2=n\}$. This function fails the hypotheses of Theorem \ref{thm:smallmult} (by not being bounded in mean square), but it satisfies the hypotheses of Theorem \ref{thm:nonneg} with $\kappa=1$. Since $\sum_{n \leq x} r(n) \sim \frac{\pi}{4}x$ by simple geometric considerations (see \cite[Theorem 339, p. 357]{HW08}), we see that
\[ \tD_{r}(u)=\lim_{R\to\infty} \frac{1}{\pi R}\#\{(x,y) \in \Z^2: 0<x^2+y^2 \leq R \text{ and } \frac{x^2+y^2}{\sigma(x^2+y^2)} \leq u\}. \]
The existence and continuity of $\tD_{r}(u)$ may be thought of as a sum-of-two-squares analogue of Davenport's result.
\item \emph{Multiplicative sets} provide a rich source of examples. Here a set $\mathcal{S}$ of natural numbers is called multiplicative if its indicator function $\1_\mathcal{S}$ is multiplicative. Suppose that $\Ss$ is multiplicative and contains a well-defined, positive proportion of the primes, in the sense that \eqref{eq:wcond} holds with $f=\1_\Ss$ and a certain $\kappa >0$. (This notion of the density of a set of primes is weaker than that of natural density.) Then Theorem \ref{thm:nonneg} shows that $n/\sigma(n)$ has a continuous, strictly increasing distribution function when restricted to $\mathcal{S}$.

As a concrete example, we may take $\Ss$ to be the set of sums of two squares (where $\kappa=\frac12$). We thus obtain another two-squares analogue of Davenport's result, this time with the elements of $\Ss$ counted without multiplicity.
\end{enumerate}
\end{ex}

\subsection*{Notation} We use an upright letter $\mathrm{e}$ for the constant $2.71828\dots$, and we (continue to) use $\i$ for the imaginary unit. If $F$ is a function on $[0,1]$, we write $\|F\|_{\infty}$ for the $L^{\infty}$-norm of $F$. We employ $O$ and $o$-notation, as well as the associated Vinogradov symbols $\ll$ and $\gg$, with the usual meanings. All implied constants are absolute unless the dependence is explicitly indicated (e.g., with a subscript).

\section{Proof of Theorem \ref{thm:smallmult}}\label{sec:smallmult}
We first show the existence of the limit \eqref{eq:dfdef} when the sharp cut-off condition $n/\sigma(n)\leq u$ is `smoothed out'.

\begin{lem}\label{lem:continuous} Let $f$ be a multiplicative function satisfying the hypotheses of Theorem \ref{thm:smallmult}. For every continuous function $\psi$ on $[0,1]$, the limit
\[ \lim_{x\to\infty} \frac{1}{x}\sum_{n \leq x} f(n) \psi\left(\frac{n}{\sigma(n)}\right) \]
exists.
\end{lem}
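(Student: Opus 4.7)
The plan is to exploit the Weierstrass approximation theorem: since $\psi$ is continuous on the compact interval $[0,1]$, for every $\varepsilon>0$ we can choose a polynomial $P(t)=\sum_{k=0}^{K}c_k t^k$ with $\|\psi-P\|_{\infty}<\varepsilon$. Since $n/\sigma(n)\in (0,1]$ for every $n$, the approximation error is controlled uniformly in $n$, and the problem reduces to handling polynomials in $n/\sigma(n)$ and estimating the deficit.

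For the polynomial piece, linearity gives
\[ \frac{1}{x}\sum_{n\leq x}f(n)P(n/\sigma(n)) = \sum_{k=0}^{K}c_k\cdot\frac{1}{x}\sum_{n\leq x}f(n)(n/\sigma(n))^k, \]
and each inner sum tends to a limit $M_k$ as $x\to\infty$, by the hypothesis of Theorem \ref{thm:smallmult}. Hence the polynomial-weighted average converges to $\sum_{k=0}^{K}c_k M_k$.

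To bound the error coming from the replacement $\psi\rightsquigarrow P$, I would apply Cauchy--Schwarz:
\[ \left|\frac{1}{x}\sum_{n\leq x}f(n)(\psi-P)(n/\sigma(n))\right| \leq \varepsilon\cdot\frac{1}{x}\sum_{n\leq x}|f(n)| \leq \varepsilon\left(\frac{1}{x}\sum_{n\leq x}|f(n)|^2\right)^{1/2}. \]
By the mean-square boundedness assumption, the right-hand side is at most $\varepsilon\sqrt{C}$ for some absolute constant $C$ and all sufficiently large $x$. Combining this with the convergence established for $P$ shows that the sequence $\frac{1}{x}\sum_{n\leq x}f(n)\psi(n/\sigma(n))$ is Cauchy, hence convergent. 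Since $\varepsilon$ was arbitrary, the limit exists.

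There is no serious obstacle here; the only point requiring a little care is ensuring the Cauchy-Schwarz bound is uniform in $x$, which is exactly what the mean-square boundedness hypothesis provides. Everything else is a routine density argument combined with the given mean-value hypotheses for the moments $f(n)(n/\sigma(n))^k$.
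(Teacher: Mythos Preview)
Your proposal is correct and follows essentially the same approach as the paper: Weierstrass approximation reduces to polynomial weights, the moment hypothesis handles each monomial, and Cauchy--Schwarz together with mean-square boundedness controls the approximation error. The paper organizes the argument slightly differently---constructing a sequence $\mu_m$ of polynomial limits, showing it is Cauchy, and identifying its limit as the desired mean value---but the ideas are identical.
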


\begin{proof} Since $\psi$ is continuous on $[0,1]$, the Weierstrass approximation theorem allows us to choose a sequence of polynomials $p_m(x)$ with $\|\psi - p_m\|_{\infty} \leq \frac{1}{m}$.  Since the arithmetic function $f(n) (n/\sigma(n))^k$ has a mean value for all nonnegative integers $k$, it follows that
	\[ \mu_m:= \lim_{x\to\infty}\frac{1}{x} \sum_{n \leq x} f(n) p_m\left(\frac{n}{\sigma(n)}\right) \]
	exists for each $m$. In fact, the sequence $\{\mu_m\}$ is Cauchy. To see this, we start by observing that
\begin{equation}\label{eq:cauchy0} |\mu_m - \mu_{m'}| \leq \|p_m-p_{m'}\|_{\infty}\cdot \limsup_{x\to\infty}\frac{1}{x} \sum_{n \leq x} |f(n)| \leq \frac{2}{\min\{m,m'\}}\cdot \limsup_{x\to\infty}\frac{1}{x} \sum_{n \leq x} |f(n)|. \end{equation}
Since $f$ is bounded in mean square,  Cauchy--Schwarz shows that
\begin{equation}\label{eq:cauchy1} \limsup_{x\to\infty}\frac{1}{x} \sum_{n \leq x} |f(n)| \leq \limsup_{x\to\infty} \left(\frac{1}{x}\sum_{n \leq x}|f(n)|^2\right)^{1/2} \ll_{f} 1.\end{equation}
Hence, $|\mu_m-\mu_{m'}| \ll_f \min\{m,m'\}^{-1}$, and so $\{\mu_m\}$ is a Cauchy sequence.
Let $\mu = \lim_{m\to\infty} \mu_m$. We claim that the limit in the statement of the lemma is precisely $\mu$. In fact, for every natural number $m$,
\begin{align*} \limsup_{x\to\infty} \bigg|\frac{1}{x}\sum_{n \leq x} f(n)& \psi\left(\frac{n}{\sigma(n)}\right) - \mu\bigg| \\&\leq |\mu-\mu_m| + \limsup_{x\to\infty} \left|\frac{1}{x}\sum_{n \leq x} f(n) \left(\psi\left(\frac{n}{\sigma(n)}\right) - p_m\left(\frac{n}{\sigma(n)}\right)\right)\right|
	\\ &\leq |\mu-\mu_m| + \|\psi-p_m\|_{\infty}\cdot \limsup_{x\to\infty}\frac{1}{x} \sum_{n \leq x} |f(n)| \ll_{f} \frac{1}{m},\end{align*}
using \eqref{eq:cauchy0} and \eqref{eq:cauchy1} in the last step. Since $m$ can be taken arbitrarily large, it follows that $\frac{1}{x}\sum_{n \leq x} f(n) \psi(n/\sigma(n)) \to \mu$, as desired.
\end{proof}

\begin{proof}[Proof of Theorem \ref{thm:smallmult}] We start by showing the existence of $D_f(u)$ for all $u \in [0,1]$, leaving the proof that $D_f(u)$ is continuous to the end. Since $D_f(1)$ is simply the mean value of $f$, we may assume that $0 \leq u < 1$.
Let $\psi$ be the characteristic function of $[0,u]$. Since $\psi$ is not continuous, we cannot directly apply Lemma \ref{lem:continuous}. To work around this, we define, for positive integers $m$ large enough that $u+\frac1m < 1$, functions
\[ \psi_m(x):= \begin{cases} 1&\text{if $0 \leq x \leq u$}, \\
1-m(x-u)&\text{if $u < x <u+\frac1m$,}\\0&\text{if $u+\frac1m \leq x \leq 1$}.
\end{cases}
\]
Since each $\psi_m$ is continuous, Lemma \ref{lem:continuous} assures the existence of
\[ \mu_m = \lim_{x\to\infty} \sum_{n \leq x}f(n) \psi_m\left(\frac{n}{\sigma(n)}\right). \]
For $m' > m$, we see that $\psi_{m'}-\psi_m$ is supported on $[u,u+\frac{1}{m}]$ and that $\|\psi_m-\psi_{m'}\|_{\infty}\leq 1$. Hence,
\begin{align}\notag |\mu_m - \mu_{m'}| &\leq \limsup_{x\to\infty}\frac{1}{x}\sum_{\substack{n \leq x \\u \leq n/\sigma(n) \leq u + \frac{1}{m}}}|f(n)|\\
	&\ll_{f} \limsup_{x\to\infty} \bigg(\frac{1}{x}\sum_{\substack{n \leq x \\u \leq n/\sigma(n) \leq u + \frac{1}{m}}}1\bigg)^{1/2} = \left(D\left(u+\frac1m\right)-D(u)\right)^{1/2}.\label{eq:largepairs}
\end{align}	
Since $D$ is continuous, the final expression tends to $0$ as $m$ tends to infinity.
Thus, the sequence of $\mu_m$ is Cauchy with limit $\mu$, say. Notice that
\begin{align*} \limsup_{x\to\infty} \bigg|\frac{1}{x}\sum_{n \leq x} f(n) &\psi\left(\frac{n}{\sigma(n)}\right) - \mu\bigg| \\&\leq |\mu-\mu_m| +  \limsup_{x\to\infty} \bigg|\frac{1}{x}\sum_{n \leq x} f(n) \left(\psi\left(\frac{n}{\sigma(n)}\right)-\psi_m\left(\frac{n}{\sigma(n)}\right)\right)\bigg|.\end{align*}
Now $\psi-\psi_m$ is supported on $[u,u+1/m]$, and $\| \psi-\psi_m\|_{\infty} \leq 1$; mimicking the process that led to \eqref{eq:largepairs}, we see that the right-hand $\limsup$ is $O_{f} ((D(u+1/m)-D(u))^{1/2})$. From \eqref{eq:largepairs}, we also have $\mu-\mu_m \ll_{f} (D(u+1/m)-D(u))^{1/2}$. Since $m$ can be taken arbitrarily large, we conclude that the limit defining $D_f(u)$ exists and equals $\mu$.

The continuity of $D_f$ is now easy and in fact was essentially handled above. Indeed, a computation analogous to \eqref{eq:largepairs} shows that for every $u, v \in [0,1]$, we have $D_f(u)-D_f(v) \ll_{f} |D(u)-D(v)|^{1/2}$. Since $D$ is continuous on $[0,1]$, it follows that $D_f$ is continuous as well.\end{proof}

Corollary \ref{cor:usual} will be deduced from the following two results. Proposition \ref{prop:wintner}, which admits a completely elementary proof, is due essentially to Wintner (see, for example, \cite[Corollary 2.3, pp. 51--52]{SS94}). Proposition \ref{prop:delange}, which lies substantially deeper, was first established by Delange \cite{delange61} (compare with \cite[Theorem 1.1, p. 234]{SS94}).

\begin{prop}\label{prop:wintner} Let $f$ be a multiplicative function satisfying \eqref{eq:smallprod}. Then $f$ has a mean value. This mean value can be expressed explicitly as
\begin{equation}\label{eq:explicitmean} \prod_{p} \left(1-\frac{1}{p}\right)\left(1+\frac{f(p)}{p} +\frac{f(p^2)}{p^2}+\dots\right). \end{equation}
\end{prop}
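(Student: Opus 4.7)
The plan is to reduce to an absolutely convergent series via the standard Dirichlet convolution trick. Define $g := f \ast \mu$, so that $f = g \ast \1$ with $\1$ denoting the constant function $1$. Since $f$ and $\mu$ are both multiplicative, so is $g$; explicitly, $g(1)=1$, $g(p) = f(p)-1$, and $g(p^j) = f(p^j) - f(p^{j-1})$ for $j \geq 2$.

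The first real step is to verify that $\sum_{d \geq 1} |g(d)|/d < \infty$. Since the summand is nonnegative and multiplicative, this is equivalent to showing $\sum_p \sum_{j \geq 1} |g(p^j)|/p^j < \infty$. The $j=1$ contribution is $\sum_p |f(p)-1|/p$, which is finite by the first half of \eqref{eq:smallprod}. For $j \geq 2$, the triangle inequality gives $|g(p^j)|/p^j \leq |f(p^j)|/p^j + |f(p^{j-1})|/p^j$. The first piece is summable by the second half of \eqref{eq:smallprod}. The second piece is handled by splitting off $j=2$ (where $|f(p)|/p^2 \leq |f(p)-1|/p^2 + 1/p^2$, both summable over $p$) and bounding the $j \geq 3$ tail by $\sum_p\sum_{j \geq 2}|f(p^j)|/p^j$ once more.

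With absolute convergence secured, I would swap the order of summation:
\[ \frac{1}{x}\sum_{n \leq x} f(n) = \frac{1}{x}\sum_{d \leq x} g(d) \lfloor x/d\rfloor = \sum_{d \leq x} \frac{g(d)}{d} - \frac{1}{x}\sum_{d \leq x} g(d)\{x/d\}. \]
The first sum on the right tends to $M := \sum_{d\geq 1}g(d)/d$, which by absolute convergence and multiplicativity factors as an Euler product. A short telescoping computation of the local factor,
\[ \sum_{j \geq 0}\frac{g(p^j)}{p^j} = \sum_{j \geq 0}\frac{f(p^j)}{p^j} - \frac{1}{p}\sum_{j \geq 0}\frac{f(p^j)}{p^j} = \left(1-\tfrac{1}{p}\right)\sum_{j \geq 0}\frac{f(p^j)}{p^j}, \]
identifies $M$ with the expression in \eqref{eq:explicitmean}. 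For the error term, given $\varepsilon > 0$ one picks $D$ so that $\sum_{d > D}|g(d)|/d < \varepsilon$; the contribution from $d \leq D$ is then $O_D(1/x) = o(1)$, while the contribution from $D < d \leq x$ is bounded, using $d/x \leq 1$ and $\{x/d\} < 1$, by $\sum_{D < d \leq x}|g(d)|/d < \varepsilon$.

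There is no real obstacle; the whole argument is routine once one establishes $\sum_d |g(d)|/d < \infty$. That verification is the only step with any content, and it reduces to combining the two hypotheses in \eqref{eq:smallprod} with the triangle inequality applied to the telescoping formula $g(p^j) = f(p^j)-f(p^{j-1})$.
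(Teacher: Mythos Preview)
Your argument is correct and is exactly the classical Wintner proof: write $f = g \ast \1$, verify $\sum_d |g(d)|/d < \infty$ from \eqref{eq:smallprod}, and then swap the order of summation. The paper does not supply its own proof of this proposition; it simply quotes the result as due to Wintner and refers the reader to \cite[Corollary 2.3, pp.~51--52]{SS94}, whose argument is the same one you have written out.
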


\begin{prop}\label{prop:delange} Let $f$ be a multiplicative function with $|f(n)|\leq 1$ for all $n \in \N$. If the series
	\[ \sum_{p} \frac{1-f(p)}{p} \]
	converges, then $f$ has a mean value, again given by \eqref{eq:explicitmean}.
\end{prop}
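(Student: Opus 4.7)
My plan is to follow the Dirichlet series strategy underlying Delange's original argument. For $\Re s > 1$, consider
\[ F(s) := \sum_{n=1}^{\infty} f(n) n^{-s}, \]
which converges absolutely and has the Euler product $\prod_p \sum_{j\geq 0} f(p^j) p^{-js}$. I would factor out the zeta function by writing $F(s) = \zeta(s) G(s)$, so that the Euler factor of $G(s)$ at a prime $p$ is
\[ (1-p^{-s})\sum_{j\geq 0} f(p^j) p^{-js} = 1 + \frac{f(p)-1}{p^{s}} + \sum_{j\geq 2} \frac{f(p^j)-f(p^{j-1})}{p^{js}}. \]
Because $|f| \leq 1$, the higher-order tail is bounded by $2 p^{-2\Re s}$; summed over all primes this converges absolutely and uniformly on compact subsets of $\{\Re s > 1/2\}$, so these terms cause no trouble.

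The crux is therefore the linear contribution $\sum_p (f(p)-1) p^{-s}$. The hypothesis of the proposition is precisely that this series converges at $s=1$. A standard Abel summation argument, using only the elementary bound $\sum_{p \leq x} 1/p \ll \log\log x$, lets me promote that single point of convergence to uniform convergence on the entire closed half-plane $\Re s \geq 1$. After taking logarithms and exponentiating, I obtain that $G(s)$ extends continuously to $\Re s \geq 1$ and holomorphically to its interior, with $G(1)$ equal to the Euler product in \eqref{eq:explicitmean}. So far this has all been soft analysis.

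The main obstacle is the Tauberian step. From $F(s) = \zeta(s) G(s)$ one immediately reads off $(s-1) F(s) \to G(1)$ as $s \to 1^{+}$, but passing from this Abel-type input to the Ces\`aro-type conclusion $\frac{1}{x}\sum_{n\leq x} f(n) \to G(1)$ is genuinely delicate when $f$ is complex-valued and can oscillate in sign: the usual Wiener--Ikehara theorem requires nonnegativity and is unavailable. To sidestep this, I would invoke Hal\'asz's theorem on mean values of bounded multiplicative functions (quoted later in \S\ref{sec:smallmult}). Under Delange's hypothesis, the ``resonant'' frequency appearing in Hal\'asz's dichotomy is forced to be $\tau = 0$, because divergence of $\sum_p (1-\cos(\tau\log p))/p$ for every $\tau \neq 0$ is a classical consequence of the prime number theorem. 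Hal\'asz's theorem then identifies the mean value of $f$ with $G(1)$, which is exactly \eqref{eq:explicitmean}.
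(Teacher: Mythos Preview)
The paper does not prove Proposition~\ref{prop:delange}; it is simply quoted from the literature, with references to Delange's original paper \cite{delange61} and to Schwarz--Spilker \cite{SS94}. So there is no in-paper argument to compare against, only your proposal to evaluate on its own terms.

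Your Tauberian step has a genuine gap. The form of Hal\'asz's theorem actually stated in the paper (Proposition~\ref{prop:halasz}) is only a criterion for the mean value to equal \emph{zero}; it says nothing about existence or identification of a nonzero mean value. Under Delange's hypothesis you can indeed show that the series \eqref{eq:hseries} converges at $\beta=0$ and (via the standard uniqueness argument using $\sum_p(1-\cos(\tau\log p))/p=\infty$ for $\tau\ne 0$) diverges elsewhere, thereby ruling out alternative~(ii); and when the Euler factor at $p=2$ is nonzero you also rule out~(i). But the conclusion is merely that $f$ does \emph{not} have mean value zero, which is compatible with the mean value failing to exist altogether. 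To obtain existence and the value $G(1)$ you need the full Hal\'asz theorem, the one giving an asymptotic $\sum_{n\le x}f(n)=c\,x^{1+\i\beta_0}/(1+\i\beta_0)+o(x)$. That result, however, already contains Delange's theorem as its $\beta_0=0$ special case, so invoking it here is circular --- and once it is on the table, your careful Dirichlet-series analysis of $G(s)$ becomes superfluous, since full Hal\'asz supplies both existence and the explicit constant directly. (A minor side remark: your claim that Abel summation promotes convergence of $\sum_p(f(p)-1)/p$ to uniform convergence of $\sum_p(f(p)-1)p^{-s}$ on the whole closed half-plane $\Re s\ge 1$ is dubious on the boundary line, where the factors $p^{1-s}=p^{-\i t}$ oscillate; but you only need continuity at $s=1$ from the right along the real axis, which is immediate.)
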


\begin{proof}[Proof of Corollary \ref{cor:usual}] Suppose first that $f$ is bounded in mean square and that \eqref{eq:smallprod} is satisfied. For each nonnegative integer $k$, let $f_k(n) = f(n) (n/\sigma(n))^k$. (Thus, $f=f_0$.) Since $|f_k(p^j)| \leq |f(p^j)|$, the double series in \eqref{eq:smallprod} remains convergent if $f$ is replaced by any of the $f_k$. Since $f_k(p) = f(p) + O_k(|f(p)|/p)$ and $\sum_{p}|f(p)-1|/p$ converges, to show that $\sum_{p}|f_k(p)-1|/p$ converges, it is enough to show that $\sum_{p}|f(p)|/p^2$ converges. But this is clear, since
\[ \sum_{p} \frac{|f(p)|}{p^2} \leq \sum_{p}\frac{|f(p)-1|}{p^2} + \sum_{p}\frac{1}{p^2} < \infty. \]
So by Proposition \ref{prop:wintner}, each $f_k$ possesses a mean value. This shows that the hypotheses of Theorem \ref{thm:smallmult} hold for $f$.

Now let us assume instead that $|f(n)| \leq 1$ for all $n$ and that the series \eqref{eq:delange} converges. With $f_k$ defined as in the last paragraph, each $f_k$ is a multiplicative function taking values in the unit disc. Since $f_k(p) = f(p) + O_k(1/p)$ and \eqref{eq:delange} converges, the series $\sum_{p}\frac{1-f_k(p)}{p}$ also converges. So by Proposition \ref{prop:delange}, each $f_k$ has a mean value. Since $f$ is clearly bounded in mean square, the hypotheses of Theorem \ref{thm:smallmult} are satisfied.
\end{proof}

To prove Corollary \ref{cor:meanzero}, we make use of a celebrated theorem of Hal\'asz \cite{halasz68} (for other expositions, see \cite[Chapter 6]{elliott79} or \cite[Theorem 3.1, p. 304]{SS94}).

\begin{prop}\label{prop:halasz} Suppose that $f$ is a multiplicative function satisfying $|f(n)| \le 1$ for all $n \in \N$.  Then $f$ has mean value zero if and only if one of the following holds:
\begin{enumerate}
\item[(i)] There is a real number $\beta$ so that $f(2^j) = -2^{\i j\beta}$ for each positive integer $j$. Moreover, the series
    \begin{equation}\label{eq:hseries} \sum_{p} \frac{1-\Re(f(p)p^{-\i \beta})}{p} \end{equation}
converges for this $\beta$.
\item[(ii)] The series \eqref{eq:hseries} diverges for every real $\beta$.
\end{enumerate}
\end{prop}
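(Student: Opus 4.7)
The plan is to prove both directions of the biconditional. The main analytic input is Hal\'asz's quantitative inequality: for any multiplicative $f$ with $|f(n)| \leq 1$ and any real $x, T \geq 2$,
\[ \frac{1}{x}\bigg|\sum_{n \leq x} f(n)\bigg| \ll (1 + \M(x,T))\, e^{-\M(x,T)} + \frac{1}{T}, \]
where $\M(x,T) := \min_{|t| \leq T}\sum_{p \leq x} (1 - \Re(f(p)p^{-\i t}))/p$. I would derive this by applying Perron's formula to the Dirichlet series $F(s) = \sum_n f(n) n^{-s}$, pushing the contour to the line $\Re(s) = 1 + 1/\log x$, and combining the Euler-product bound $|F(1+1/\log x + \i t)| \ll (\log x)\exp(-\sum_{p \leq x}(1-\Re(f(p)p^{-\i t}))/p)$ with a mean-square estimate over $t \in [-T,T]$ handled by a standard Dirichlet polynomial / large-sieve inequality.

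For the forward implication, suppose first that (i) holds. I would set $g(n) := f(n) n^{-\i\beta}$; then $g$ is multiplicative, $|g(n)| \leq 1$, and $\sum_p (1 - \Re(g(p)))/p$ converges, so by Proposition~\ref{prop:delange} the function $g$ has mean value given by \eqref{eq:explicitmean}. Because $g(2^j) = -1$ for every $j \geq 1$, the Euler factor at $2$ is $(1-\tfrac12)(1 + \sum_{j\geq 1}(-1)/2^j) = 0$, so $g$ has mean value zero. A one-line partial summation then converts this into $\sum_{n \leq x} f(n) = o(x)$. If instead (ii) holds, I would invoke Hal\'asz's inequality with $T = T(x)$ tending slowly to infinity: divergence of $\sum_p (1-\Re(f(p)p^{-\i\beta}))/p$ for every $\beta$, combined with a compactness argument on $[-T,T]$, forces $\M(x,T) \to \infty$, whence the mean value vanishes.

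For the reverse implication, suppose that neither (i) nor (ii) holds. Choose $\beta$ for which the series converges and a $j_0$ with $f(2^{j_0}) \neq -2^{\i j_0 \beta}$. With $g$ as above, Proposition~\ref{prop:delange} gives $g$ a mean value equal to the product \eqref{eq:explicitmean}, which I would show is nonzero factor by factor. At odd primes $p$ the factor is nonzero since $|\sum_{j\geq 1} g(p^j)/p^j| \leq 1/(p-1) < 1$; at $p = 2$ it would vanish only if $|\sum_{j\geq 1} g(2^j)/2^j| = 1$, which (equality in the triangle inequality) forces $g(2^j) = -1$ for every $j$ and contradicts the choice of $j_0$. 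Hence $g$ has nonzero mean value $c$, and partial summation yields $\sum_{n \leq x} f(n) = \frac{c}{1+\i\beta}\, x^{1+\i\beta} + o(x)$, so $\frac{1}{x}\sum_{n \leq x}f(n)$ either tends to the nonzero constant $c$ (when $\beta = 0$) or oscillates without a limit (when $\beta \neq 0$); either way $f$ does not have mean value zero.

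The main obstacle is Hal\'asz's inequality itself, whose proof demands a careful contour shift together with a nontrivial mean-square estimate for $F$ on the line $\Re(s) = 1+1/\log x$. A secondary subtlety, arising in case (ii), is that pointwise divergence of $\sum_p (1-\Re(f(p)p^{-\i\beta}))/p$ in $\beta$ does not directly yield $\M(x,T_x) \to \infty$ when the minimizing $t$ escapes to infinity; ruling out this escape exploits exactly the ``$2$-adic rotation'' phenomenon isolated in case (i).
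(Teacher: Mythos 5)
First, a point of comparison: the paper does not prove this proposition at all. It is quoted as a ``celebrated theorem of Hal\'asz'' with references to \cite{halasz68}, \cite{elliott79}, and \cite{SS94}, so the only question is whether your sketch would itself constitute a proof. It would not, because of a recurring misapplication of Delange's theorem. In both the case-(i) forward direction and the reverse direction you pass to $g(n)=f(n)n^{-\i\beta}$ and invoke Proposition~\ref{prop:delange}, but the hypothesis available to you is only that $\sum_p (1-\Re g(p))/p$ converges, whereas Delange requires convergence of the \emph{complex} series $\sum_p(1-g(p))/p$. These are genuinely different. Take $f$ completely multiplicative with $f(p)=\exp(\i/\log\log p)$ for $p>\e^{\e}$ and $f(p)=1$ otherwise: then $1-\Re f(p)\asymp (\log\log p)^{-2}$, so \eqref{eq:hseries} converges at $\beta=0$, while $\sum_p \Im f(p)/p \asymp \sum_p 1/(p\log\log p)$ diverges. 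For such $f$ the product \eqref{eq:explicitmean} does not converge, $f$ has no mean value at all (the normalized partial sums have nearly constant modulus but slowly drifting argument), and your argument would wrongly assign it the mean value \eqref{eq:explicitmean}. This is exactly the content of Hal\'asz's theorem beyond Delange: the convergent-real-part case requires the full asymptotic $\frac{1}{x}\sum_{n\le x}f(n)=\frac{x^{\i\beta}}{1+\i\beta}L(\log x)+o(1)$ with a slowly oscillating factor $L$, which is the harder half of \cite{halasz68} and is not delivered by the contour-shift upper bound you sketch. You need that asymptotic both to see that the mean value is $0$ in case (i) (especially when $\beta\ne 0$) and to see that it is not $0$ when neither (i) nor (ii) holds; the conclusion in the latter regime can be ``no mean value exists,'' not ``nonzero mean value exists,'' which your Delange-plus-partial-summation route cannot detect.

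Your treatment of case (ii) is the standard one and is fine in outline, granting the quantitative Hal\'asz inequality. One small correction there: the ``escape to infinity'' of the minimizing $t$ that you worry about at the end is not where the $2$-adic phenomenon enters --- it is simply absorbed by the $O(1/T)$ term after fixing $T$ large and letting $x\to\infty$ first. The genuine point to check is the fixed-$T$ compactness/equicontinuity step showing that pointwise divergence of \eqref{eq:hseries} on $[-T,T]$ forces $\M(x,T)\to\infty$, which you do flag. The $2$-adic condition in (i) instead governs whether the Euler factor at $p=2$ in the limiting product vanishes, which your triangle-inequality observation identifies correctly.
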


\begin{proof}[Proof of Corollary \ref{cor:meanzero}] This will be a corollary of the proof of Theorem \ref{thm:smallmult}, rather than the result itself. As above, let $f_k(n):= f(n) (n/\sigma(n))^k$. Since $f$ has mean value zero, but there is no $\beta$ with $f(2^j) = -2^{\i j\beta}$ for all $j$, it must be that \eqref{eq:hseries} diverges for every real $\beta$. Since $f_k(p) = f(p) + O(1/p)$, the series \eqref{eq:hseries} remains divergent for every real $\beta$ if $f$ is replaced by any of the $f_k$. So by Proposition \ref{prop:halasz} again, each $f_k$ has mean value zero.

Referring back to the proof of Lemma \ref{lem:continuous}, it follows that if $\psi$ is any continuous function on $[0,1]$, then $\frac{1}{x}\sum_{n \leq x} f(n)\psi(n/\sigma(n))\to 0$. Now referring to the proof of Theorem \ref{thm:smallmult}, we see that $D_f(u)$ vanishes identically, as desired.
\end{proof}

\section{Proof of Theorem \ref{thm:nonneg}}\label{sec:nonneg} Let $f$ be a nonnegative multiplicative function satisfying the conditions of Theorem \ref{thm:nonneg}. For each real $x \geq 1$, we introduce the distribution function
\begin{equation}\label{eq:fxchoice} F_x(u)= \frac{1}{S(f;x)} \sum_{\substack{n \leq x \\ \log(n/\sigma(n)) \le u}} f(n). \end{equation}
The reason for working with $\log(n/\sigma(n))$ instead of directly with $n/\sigma(n)$ is to ensure that the characteristic function of $F_x$ is amenable to analysis; this will be important later. Theorem \ref{thm:nonneg} is equivalent to the claim that the $F_x$ converge weakly to a continuous distribution function $F$ that is strictly increasing on $(-\infty,0]$. Indeed, $\tD_f$ and $F$ are related by the change of variables $\tD_f(\e^u) = F(u)$.

Our attack proceeds in three stages. First, we show the existence of the limiting distribution $F$. Next, we prove the continuity of $F$. Finally, we establish that $F$ is strictly increasing.

\subsection{Existence} We will apply L\'evy's convergence theorem, a well-known result drawn from the probabilist's toolchest (see, for example, \cite[Corollary 1, p. 350]{billingsley95}).

\begin{prop} Suppose that $\{F_x\}$ is any collection of distribution functions indexed by real numbers $x\geq 1$. For each $x\geq 1$, let $\phi_x(t)$ be the characteristic function of $F_x$. The following two statements are equivalent.
\begin{enumerate}
\item[(i)] The $F_x$ converge weakly to a distribution function $F$, as $x\to\infty$.
\item[(ii)] As $x\to\infty$, the $\phi_x$ converge pointwise on all of $\R$ to a function $\psi$ that is continuous at $0$.
\end{enumerate}
When \emph{(ii)} holds, $\psi$ is the characteristic function of the limiting distribution $F$.
\end{prop}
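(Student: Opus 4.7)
My plan is to prove the two implications separately, with (i) $\Rightarrow$ (ii) being essentially immediate and (ii) $\Rightarrow$ (i) requiring the standard L\'evy strategy of combining Helly's selection theorem with a tightness estimate. For the easy direction, weak convergence of $F_x$ to $F$ means by definition that $\int g\, dF_x \to \int g\, dF$ for every bounded continuous $g$, and applying this to the real and imaginary parts of $y \mapsto \e^{\i t y}$ immediately gives the pointwise convergence $\phi_x(t) \to \phi(t) := \int \e^{\i ty}\, dF(y)$. The limit $\phi$ is the characteristic function of a probability measure, and a one-line dominated-convergence bound on $|\phi(t+h)-\phi(t)|$ shows that any such function is uniformly continuous on $\R$, so in particular continuous at $0$.

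For the substantive direction (ii) $\Rightarrow$ (i), given any sequence $x_n \to \infty$, I would invoke Helly's selection theorem to extract a subsequence along which $F_{x_n}$ converges at every continuity point of some right-continuous non-decreasing $\tilde F \colon \R \to [0,1]$. The danger is that $\tilde F$ may have total variation strictly less than $1$, i.e., mass escapes to $\pm\infty$. To rule this out, the key ingredient is the L\'evy tightness inequality
\[ 1 - F_x(2/\delta) + F_x(-2/\delta) \;\ll\; \frac{1}{\delta}\int_{-\delta}^{\delta}\bigl(1 - \Re\phi_x(t)\bigr)\, dt, \]
valid for every $\delta > 0$ and any distribution $F_x$, which follows from a direct Fubini calculation applied to $\int(1-\cos(ty))\, dF_x(y)$. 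Since $\phi_x \to \psi$ pointwise and $|1-\Re\phi_x| \le 2$, bounded convergence lets me replace $\phi_x$ by $\psi$ in the integral as $x \to \infty$; then continuity of $\psi$ at $0$ together with $\psi(0) = \lim \phi_x(0) = 1$ makes the resulting quantity vanish as $\delta \to 0$. This establishes tightness of $\{F_x\}$ and forces $\tilde F$ to be a bona fide distribution function.

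The endgame is to identify the limit and promote subsequential convergence to convergence of the full family. Once $\tilde F$ is known to be a probability distribution, the already-proved implication (i) $\Rightarrow$ (ii) identifies its characteristic function with the pointwise limit $\psi$. Since characteristic functions uniquely determine probability distributions, every vague subsequential limit equals a single common $F$, and the standard subsequence-of-subsequence argument then promotes this to weak convergence of the entire family $F_x \to F$, whose characteristic function is $\psi$. The only genuinely substantial step is the L\'evy tightness inequality and the extraction of no-escape-of-mass from continuity of $\psi$ at $0$; everything else is organizational bookkeeping with characteristic functions and distribution functions.
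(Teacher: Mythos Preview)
Your sketch is correct and follows the standard textbook proof of L\'evy's continuity theorem. However, the paper does not give its own proof of this proposition: it simply quotes the result as ``a well-known result drawn from the probabilist's toolchest'' with a reference to Billingsley's \emph{Probability and Measure}. There is therefore nothing to compare against; your argument supplies exactly the proof the cited reference contains.
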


To evaluate the limit of the $\phi_x$ for our choice \eqref{eq:fxchoice} of $\{F_x\}$, we need the following versatile theorem of Wirsing \cite[Satz 1.1.1]{wirsing67}.

\begin{prop}\label{prop:wirsing} Suppose that $f$ is a complex-valued multiplicative function with the property that as $x\to\infty$,
\[ \sum_{p \leq x} f(p) \frac{\log{p}}{p} \sim \kappa \log{x} \]
for some real $\kappa > 0$. Suppose also that $f(p)$ is bounded and that \[ \sum_{p}\sum_{j \geq 2} \frac{|f(p^j)|}{p^j} < \infty. \] If $\kappa \leq 1$, suppose further that \[ \sum_{p^j \leq x} |f(p^j)| \ll_{f} x/\log{x} \quad \text{{\rm(}for $x\geq 2${\rm)}}.\]
Finally, suppose that \[\sum_{p}\frac{1}{p}(|f(p)|-\Re(f(p))) < \infty.\]Then as $x\to\infty$,
\begin{equation}\label{eq:wirsingasymptotic} \sum_{n \leq x} f(n) \sim \frac{\e^{-\gamma \kappa}}{\Gamma(\kappa)} \frac{x}{\log{x}} \prod_{p \leq x}\left(1+\frac{f(p)}{p} + \frac{f(p^2)}{p^2} + \dots\right). \end{equation}
Here $\gamma$ is the Euler--Mascheroni constant, and $\Gamma(\cdot)$ is the classical Gamma-function.
\end{prop}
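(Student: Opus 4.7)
My plan is to follow the classical elementary approach of Wirsing (subsequently refined by Levin and Fainleib), which converts the hypothesis on $\sum_{p \leq x} f(p) \log p/p$ into an asymptotic integral equation for the partial sum $M(x) := \sum_{n \leq x} f(n)$. The central device is the associated von Mangoldt function $\Lambda_f$, supported on prime powers and defined multiplicatively by the convolution identity $f \cdot \log = f * \Lambda_f$; on primes this gives $\Lambda_f(p) = f(p) \log p$, and on higher prime powers it unwinds polynomially in $f(p), f(p^2), \ldots$. Hypothesis \eqref{eq:wcond} then reads $\sum_{p \leq x} \Lambda_f(p)/p \sim \kappa \log x$, while \eqref{eq:twoormore} together with the boundedness of $f(p)$ controls the higher prime-power contributions, and partial summation converts all of this into $\sum_{n \leq x} \Lambda_f(n) = \kappa x + o(x/\log x)$.

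First I would derive the basic integral equation. Exchanging the order of summation in
\[ \sum_{n \leq x} f(n) \log n = \sum_{mk \leq x} f(m) \Lambda_f(k), \]
rewriting the left-hand side as $M(x) \log x - \int_1^x M(t)\,dt/t$, and then inserting the asymptotic for $\sum_{k \leq y} \Lambda_f(k)$ via Abel summation, yields an approximate equation of the form
\[ M(x) \log x = \int_1^x M(t)\frac{dt}{t} + \kappa \int_1^x M(x/t)\,dt + E(x), \]
with $E(x)$ an error term handled by the stated hypotheses. The extra assumption $\sum_{p^j \leq x} f(p^j) \ll_f x/\log x$ in the range $\kappa \leq 1$ is exactly what prevents a handful of outsized prime-power contributions from dominating $E(x)$ and corrupting the main term.

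Second, I would solve this asymptotically using the ansatz $M(x) = c\,x P(x)/\log x$, where $P(x) := \prod_{p \leq x}(1 + f(p)/p + f(p^2)/p^2 + \cdots)$. Mertens' theorem combined with \eqref{eq:wcond} shows that $P(x)$ is comparable to $(\log x)^{\kappa}$ times a convergent Euler product, and matching leading terms in the integral equation determines $c$; the factor $\e^{-\gamma \kappa}$ emerges from Mertens' formula $\prod_{p \leq x}(1 - 1/p) \sim \e^{-\gamma}/\log x$, and the factor $1/\Gamma(\kappa)$ from the Laplace-type identity $\int_0^1 (-\log t)^{\kappa-1}\,dt = \Gamma(\kappa)$. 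The main obstacle is this final step: rigorously upgrading the approximate integral equation to a sharp asymptotic with the correct constant. The standard device is to iterate the equation and show that the normalized discrepancy $M(x) \log x/(x P(x)) - c$ contracts; Wirsing's original argument accomplishes this by a careful induction on a cleverly chosen error functional. The passage is delicate precisely because the whole argument is real-variable and Tauberian in spirit, with no complex-analytic input to lean on, so all the slack must be squeezed out of \eqref{eq:wcond} and its companion hypotheses directly.
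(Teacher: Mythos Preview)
The paper does not prove this proposition at all; it is quoted as Wirsing's theorem with a citation to \cite[Satz 1.1.1]{wirsing67}. So there is no ``paper's own proof'' to compare against.

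That said, your sketch is in the right general territory---Wirsing's argument does proceed via a convolution identity of the type $f\cdot\log = f*\Lambda_f$ and an approximate integral equation for $M(x)$---but one concrete step in your outline is not justified by the stated hypotheses. You claim that partial summation converts
\[
\sum_{p\le x}\frac{f(p)\log p}{p}\sim \kappa\log x
\]
into $\sum_{n\le x}\Lambda_f(n)=\kappa x+o(x/\log x)$. Partial summation here goes the wrong way: writing $A(t)=\sum_{p\le t}f(p)\log p/p$ and integrating $t\,dA(t)$ gives
\[
\sum_{p\le x}f(p)\log p = xA(x)-\int_1^x A(t)\,dt = \kappa x + o(x\log x),
\]
two full powers of $\log x$ weaker than what you assert. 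The boundedness of $f(p)$ only yields the crude bound $O(x)$, not an asymptotic. Indeed, one can have $f(p)\in\{0,1\}$ with the correct logarithmic average $\kappa$ but with $\sum_{p\le x}f(p)\log p$ oscillating, so no asymptotic of the form $\kappa x+o(x)$ can hold in general.

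Wirsing's actual route avoids ever needing the unweighted sum $\sum_{n\le x}\Lambda_f(n)$. The functional relation is organized so that only the \emph{weighted} averages (essentially $\sum_{p\le x}f(p)\log p/p$ and $\sum_{p\le x}f(p)/p$) enter, and the passage to the asymptotic for $M(x)$ is carried out by an iterative comparison argument rather than by substituting a pointwise asymptotic for $\Lambda_f$. Your second paragraph (the ansatz and matching of constants) is closer to the truth, but the ``delicate'' step you flag is genuinely the entire content of the proof, not a technicality to be waved at; absent the incorrect $\Lambda_f$ input, the integral equation you wrote down does not hold, and the correct version requires the full iterative machinery of \cite{wirsing67}.
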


\begin{proof}[Proof of the existence of the limiting distribution $F$] The characteristic function $\phi_x$ of $F_x$ is given by
\[ \phi_x(t) = \frac{1}{S(f;x)} \sum_{n \leq x} f(n) (n/\sigma(n))^{\i t}. \]
Because of the conditions on $f$ in Theorem \ref{thm:nonneg}, Proposition \ref{prop:wirsing} yields an asymptotic formula for $S(f;x)$. Proposition \ref{prop:wirsing} may also be applied to give an analogous formula for the partial sums of $f(n) (n/\sigma(n))^{\i t}$. To see this, notice that $|f(n) (n/\sigma(n))^{\i t}|=f(n)$, and that
\[ (p/\sigma(p))^{\i t} -1 = \left|\exp\left(\i t \log\frac{p}{p+1}\right	) -1\right| \leq |t \log \frac{p}{p+1}| =|t| \log \frac{p+1}{p} \leq |t|/p, \]
so that
\begin{equation}\label{eq:fdiff} f(p) (p/\sigma(p))^{\i t}  = f(p) + O(|t|/p).\end{equation}
The hypotheses of Proposition \ref{prop:wirsing}, with the same $\kappa$ as in \eqref{eq:wcond}, are now easily seen to follow from the conditions assumed on $f$. Comparing the asymptotic estimates obtained from \eqref{eq:wirsingasymptotic} for $f(n)$ and $f(n) (n/\sigma(n))^{it}$, we find that as $x\to\infty$ with $t$ fixed,
\begin{equation}\label{eq:asymptoticrelation} \phi_x(t) \sim \prod_{p \leq x} \left(\left(\sum_{j=0}^{\infty} \frac{f(p^j)}{p^j}  (p^j/\sigma(p^j))^{\i t}\right) \cdot \left(\sum_{j=0}^{\infty} \frac{f(p^j)}{p^j}\right)^{-1}\right).\end{equation} For notational convenience, let us write
\[ \alpha_p(t) = \sum_{j=0}^{\infty} \frac{f(p^j)}{p^j} (p^j/\sigma(p^j))^{\i t}, \quad\text{and}\quad \Delta_p = \sum_{j=0}^{\infty} \frac{f(p^j)}{p^j}. \]
Note that $\Delta_p$ is finite for every $p$, by \eqref{eq:twoormore}. Since the terms in the series defining $\alpha_p(t)$ are bounded in absolute value by the corresponding terms in $\Delta_p$, the series for $\alpha_p(t)$ converges uniformly, and so $\alpha_p(t)$ is continuous everywhere. Let
\[ \eta_p = \sum_{j=2}^{\infty} \frac{f(p^j)}{p^j}. \]
We will show below that for all primes $p$ exceeding a certain constant $p_0$,
\begin{equation}\label{eq:alphabeta} \alpha_p(t) \Delta_p^{-1} = 1 + O\left(\frac{1+|t|}{p^2} + \eta_p\right); \end{equation}
we allow both $p_0$ and the implied constant to depend on $f$. Now $\sum_{p} \frac{1}{p^2} < \infty$, and \eqref{eq:twoormore} asserts that $\sum_{p} \eta_p < \infty$. Assuming for the time being that \eqref{eq:alphabeta} has been established, we see that the series $\sum_{p > p_0} |\alpha_p(t) \Delta_p^{-1}-1|$ converges uniformly on any interval $[-T,T]$. Consequently, the infinite product
\[ \prod_{p > p_0} \alpha_p(t) \Delta_p^{-1} \]
converges to a function of $t$ that is continuous everywhere. Of course, the finite product $\prod_{p \leq p_0} \alpha_p(t) \Delta_p^{-1}$ is also continuous on all of $\R$. We conclude from \eqref{eq:asymptoticrelation} that as $x\to\infty$,
\[ \phi_x(t) \to \psi(t), \]
where
\begin{equation}\label{eq:psidef} \psi(t) := \prod_{p} \left(\left(\sum_{j=0}^{\infty} \frac{f(p^j)}{p^j}  (p^j/\sigma(p^j))^{\i t}\right) \cdot \left(\sum_{j=0}^{\infty} \frac{f(p^j)}{p^j}\right)^{-1}\right)  \end{equation}
is continuous everywhere. So by L\'evy's criterion, the $F_x$ converge weakly to a limiting distribution $F$ with characteristic function $\psi$.

It remains to establish the estimate \eqref{eq:alphabeta}. Using \eqref{eq:fdiff} once more, we find that
\begin{align}\notag\alpha_p(t) \Delta_p^{-1} &= \left(1+\frac{f(p)}{p} (p/\sigma(p))^{\i t} + O(\eta_p)\right) \Delta_p^{-1}\\
&=  \left(1+\frac{f(p)}{p} + O(|t|/p^2)\right)\Delta_p^{-1} + O(\eta_p).\label{eq:alphabeta0} \end{align}
Now $\Delta_p = 1 + \frac{f(p)}{p} + \eta_p$. We are assuming that $f(p)=O(1)$ and that $\sum_{p} \eta_p$ converges; thus, we can choose $p_0$ so that $0 \leq \Delta_p -1 \leq \frac12$ for all $p > p_0$. Since $\frac{1}{1+z}=1-z+O(z^2)$ for $|z| \leq \frac12$, we have for $p > p_0$ that
\begin{align}\notag \Delta_p^{-1} &= 1 - (\Delta_p-1) + O((\Delta_p-1)^2) \\
&= 1-\frac{f(p)}{p} + O\left(\eta_p + \frac{1}{p^2}\right).\label{eq:alphabeta1}
\end{align}
Substituting \eqref{eq:alphabeta1} into \eqref{eq:alphabeta0} yields \eqref{eq:alphabeta}.
\end{proof}

\subsection{Continuity} Let $X_p$ denote the discrete random variable taking the value $\log \frac{f(p^j)}{p^j}$ with probability $\frac{1}{\Delta_p} \cdot \frac{f(p^j)}{p^j}$, for each $j=0, 1, 2, \dots$.
Let $\phi_{X_p}$ be the characteristic function of $X_p$. Then
\begin{align*} \phi_{X_p}(t) = \mathbf{E}[\e^{\i t X_p}] &= \sum_{j=0}^\infty \e^{\i t \log\frac{p^j}{\sigma(p^j)}} \cdot \Prob\left(X_p = \log\frac{p^j}{\sigma(p^j)}\right) \\&=
\left(\sum_{j=0}^{\infty} \frac{f(p^j)}{p^j} (p^j/\sigma(p^j))^{\i t}\right) \left(\sum_{j=0}^{\infty} \frac{f(p^j)}{p^j}\right)^{-1}, \end{align*} which is precisely the $p${th} term in the product formula \eqref{eq:psidef}. This shows (cf. \cite[eq. (12)]{erdoswintner39}) that $\psi(t)$ is the infinite convolution of the $\phi_{X_p}$, as $p$ ranges over the primes. The following result of L\'{e}vy \cite[Lemma 1.22, p. 46]{elliott79} provides the approach that we will adopt in our proof that $\psi(t)$ is continuous.

\begin{lem}\label{lem:levy} Suppose that $\psi$ is an infinite convergent convolution of purely discontinuous distribution functions $\phi_1$, $\phi_2$, \dots; that is, $\psi = \phi_1 \ast \phi_2 \ast \cdots$. Let $d_k$ be the maximal jump of each $\phi_k$. If $\sum_{k=1}^{\infty} (1-d_k)$ diverges, then the limit distribution is continuous.
\end{lem}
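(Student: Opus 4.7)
The plan is to adopt a probabilistic setup: let $Y_1, Y_2, \ldots$ be independent random variables with $Y_k$ having distribution $\phi_k$, so that convergence of the convolution forces $S := \sum_{k=1}^{\infty} Y_k$ to converge almost surely (by Kolmogorov's three-series theorem, or equivalently L\'evy's theorem on sums of independent random variables), and $S$ has distribution $\psi$. The goal becomes showing that $d(\psi) := \sup_a \Prob(S = a) = 0$.

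I would first pass to the second-moment functional $A(\mu) := \sum_a \mu(\{a\})^2$, which dominates $d(\mu)^2$. Young's inequality for convolutions gives $A(\mu \ast \nu) \leq A(\mu)$, so $A$ is non-increasing under convolution and $A(\psi) \leq A(S_n)$ for every $n$, where $S_n := \sum_{k \leq n} Y_k$. It therefore suffices to show $A(S_n) \to 0$. Introducing independent copies $Y_k'$ and setting $Z_k := Y_k - Y_k'$, symmetrization gives $A(S_n) = \Prob(S_n = S_n') = \Prob(\sum_{k \leq n} Z_k = 0)$. The $Z_k$ are independent and symmetric, and satisfy $\Prob(Z_k = 0) = \sum_x \phi_k(\{x\})^2 \leq d_k$, so $\Prob(Z_k \neq 0) \geq 1 - d_k$.

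The main step, and the main obstacle, is bounding $\Prob(\sum_{k \leq n} Z_k = 0)$ in terms of $N_n := \#\{k \leq n : Z_k \neq 0\}$. I would condition on the $\sigma$-field $\mathcal{F}_n$ generated by the magnitudes $|Z_k|$ and the indicators $\1\{Z_k \neq 0\}$ for $k \leq n$; by symmetry of each $Z_k$, the signs $\epsilon_k := Z_k/|Z_k|$ on the nonzero indices then become independent fair $\pm 1$ random variables. The Erd\H{o}s--Littlewood--Offord theorem --- for any $N$ nonzero reals $a_1,\ldots,a_N$ and iid signs $\epsilon_j$, $\Prob(\sum_j \epsilon_j a_j = 0) \leq \binom{N}{\lfloor N/2 \rfloor} 2^{-N} = O(N^{-1/2})$ --- yields the conditional bound $\Prob(\sum_{k \leq n} Z_k = 0 \mid \mathcal{F}_n) = O(N_n^{-1/2})$ on the event $\{N_n \geq 1\}$. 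Taking expectations and using $\E[N_n] \geq \sum_{k \leq n}(1-d_k) \to \infty$ together with $\mathrm{Var}(N_n) \leq \E[N_n]$ (Chebyshev gives concentration of $N_n$ near its mean), I conclude $A(S_n) \ll \E[\min(1, N_n^{-1/2})] \to 0$. Consequently $A(\psi) = 0$, so $\psi$ has no atoms, as required.
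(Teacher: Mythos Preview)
The paper does not prove this lemma; it is quoted from Elliott's book as a result of L\'evy, with no argument supplied. So there is no ``paper's proof'' to compare against --- any correct proof you give stands on its own.

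Your argument is correct. The chain $A(\psi)\le A(S_n)=\Prob\bigl(\sum_{k\le n}Z_k=0\bigr)$ is valid: the first inequality is the $\ell^2$--$\ell^1$ case of Young's inequality applied to the factorisation $\psi=\text{law}(S_n)\ast\text{law}(S-S_n)$ (the discrete hypothesis on the $\phi_k$ makes $\text{law}(S_n)$ purely atomic, which is all that is needed), and the identity $A(\rho)=\Prob(W=W')$ for $W,W'$ i.i.d.\ with law $\rho$ justifies the symmetrisation step. The sign--conditioning is legitimate because each $Z_k$ is symmetric and the $Z_k$ are independent, so the conditional law of $(\epsilon_k)_{k:\,Z_k\ne0}$ given the $|Z_k|$ really is a product of fair coins. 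Erd\H{o}s's antichain bound then gives the $O(N_n^{-1/2})$ conditional estimate, and your Chebyshev argument for $N_n\to\infty$ in probability, combined with dominated convergence, closes the proof.

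For context, what you have written is essentially a proof of the point-mass case of the Kolmogorov--Rogozin concentration inequality, $d(\psi)\le C\bigl(\sum_k(1-d_k)\bigr)^{-1/2}$, via symmetrisation and Littlewood--Offord; this is one of the standard routes to L\'evy's lemma. The classical presentations (as in Elliott) sometimes argue more directly with the concentration function, but your approach is clean and entirely self-contained.
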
 

\begin{proof}[Proof of continuity] Let $d_p$ be the maximal jump in the distribution function of $X_p$. By Lemma \ref{lem:levy}, it suffices to show that $\sum_{p} (1 - d_p)$ diverges.  Now the distribution function of $X_p$ has jumps of size $\frac{1}{\Delta_p} \frac{f(p^j)}{p^j}$ at the points $\log \frac{p^j}{\sigma(p^j)}$, where $j$ ranges over those nonnegative integers with $f(p^j) \neq 0$. Taking $j=0$, we see that there is a jump at $x=0$ of size $\frac{1}{\Delta_p}$. Since $f(p)/p$ and $\eta_p$ both tend to zero, we may choose $p_0$ so that $\Delta_p = 1+ f(p)/p + \eta_p < 2$ for all $p > p_0$. For these values of $p$, we have $\frac{1}{\Delta_p} > \frac{1}{2}$, and so the largest jump must occur at $x=0$. Hence, $d_p = \frac{1}{\Delta_p}$ for $p > p_0$, and \begin{align*} \sum_p (1 - d_p) \geq \sum_{p > p_0} \frac{\Delta_p - 1}{\Delta_p}\geq \frac{1}{2} \sum_{p>p_0} (\Delta_p-1) \geq \frac{1}{2}\sum_{p > p_0}\frac{f(p)}{p}.\end{align*} Recall that $\sum_{p \leq x} \frac{f(p)}{p} \log p \sim \kappa \log x$ for a certain $\kappa > 0$. By partial summation,
\begin{equation}\label{eq:divergentsum} \sum_{p \leq x} \frac{f(p)}{p} \sim \kappa \log\log{x} \qquad\text{(as $x\to\infty$)}.\end{equation} Consequently, $\sum_p (1 - d_p)$ diverges.\end{proof}

\subsection{Strict monotonicity} Since we have already established the existence and continuity of $F$, we know at this point that $\tD_f$ is a well-defined, continuous function on $[0,1]$. Rather than prove that $F$ is strictly increasing on $(-\infty,0]$, we prove directly that $\tD_f$ is strictly increasing on $[0,1]$.

\begin{proof}[Proof that $\tD_f$ is strictly increasing]
It suffices to show that for $u, v \in [0,1]$ with $v< u$,
\begin{equation}\label{eq:liminfbit} \liminf_{x\to\infty} \frac{1}{S(f;x)} \sum_{\substack{n \leq x \\v < n/\sigma(n) \leq u}} f(n) > 0.\end{equation}
In proving \eqref{eq:liminfbit}, there is no loss of generality in assuming that $f$ is supported on squarefree integers. This is because $S(f;x)$ and $S(f\mu^2;x)$ have the same order of magnitude. To see this last claim, note that comparing the corresponding versions of \eqref{eq:wirsingasymptotic} shows that
\[ S(f; x) \sim S(f \mu^2;x) \cdot \prod_{p\leq x} \left(1 + \eta_p\left(1+\frac{f(p)}{p}\right)^{-1}\right), \]
as $x\to\infty$. Since $\sum_{p} \eta_p < \infty$, the right-hand product converges as $x\to\infty$. Thus, $S(f;x) \asymp S(f\mu^2;x)$ for large $x$, as claimed.

Since $f(p)$ is bounded, \eqref{eq:divergentsum} implies that the sum of the reciprocals of those $p$ with $f(p)\neq 0$ diverges. Since $|\log\frac{p}{\sigma(p)}|\asymp \frac{1}{p}$, we may use the greedy algorithm to select a squarefree natural number $m$ with $f(m) > 0$ and with $v < \frac{m}{\sigma(m)} \leq u$. We keep this $m$ fixed for the remainder of the argument. We let $y$ be a real parameter, viewed as fixed but eventually to be chosen very large. For now, we assume that $y$ exceeds the largest prime factor of $m$.

Consider the contribution to the sum in \eqref{eq:liminfbit} from those $n=mq$, where $q$ is squarefree and coprime to $\Pi_y:= \prod_{p \leq y} p$. We will show that if $y$ is chosen sufficiently large, then this contribution is already enough to imply \eqref{eq:liminfbit}. Notice that
\begin{align}\notag \sum_{\substack{q \leq x/m \\ \gcd(q,\Pi_y)=1\\v < mq/\sigma(mq) \leq u}} f(mq) &= f(m) \sum_{\substack{q \leq x/m \\ \gcd(q,\Pi_y)=1\\q/\sigma(q) > v\sigma(m)/m}} f(q) \\
		&\geq f(m) \sum_{\substack{q \leq x/m \\ \gcd(q,\Pi_y)=1}} f(q)\left(1-v\frac{\sigma(m)}{m}\cdot \frac{\sigma(q)}{q}\right). \label{eq:last}\end{align}
Let $\1_y$ be the indicator function of those numbers coprime to $\Pi_y$. Set $a_y(n) = f(n)\1_y(n)$ and $b_y(n) = f(n) \frac{\sigma(n)}{n}\1_y(n)$. The sum in \eqref{eq:last} can be written as
\begin{equation}\label{eq:sadiff} S(a_y; x/m) - v\frac{\sigma(m)}{m} S(b_y; x/m).\end{equation}
By Proposition \ref{prop:wirsing} and our assumption that $f$ is supported on squarefrees,
\begin{equation}\label{eq:sfxbound} S(f;x) \sim  \frac{\e^{-\gamma \kappa}}{\Gamma(\kappa)} \frac{x}{\log{x}} \prod_{p \leq x}\left(1+\frac{f(p)}{p}\right). \end{equation}
Since $f(p)$ is bounded, the asymptotic relation \eqref{eq:sfxbound} remains valid even if the product is shortened to be over the primes $p \leq x/m$. Now applying Proposition \ref{prop:wirsing} to $a_y$, we find that
\begin{align}\notag S(a_y; x/m) &\sim \frac{\e^{-\gamma \kappa}}{\Gamma(\kappa)} \frac{x}{m\log{x}} \prod_{y< p \leq x/m}\left(1+\frac{f(p)}{p}\right) \\
&\sim \frac{1}{m} S(f;x) \cdot \prod_{p \leq y}\left(1+\frac{f(p)}{p}\right)^{-1}.\label{eq:saasymp}
\end{align}
Similarly,
\begin{equation}\label{eq:sbasymp} S(b_y; x/m) \sim \frac{1}{m} S(f;x) \cdot \prod_{p \leq y}\left(1+\frac{f(p)}{p}\right)^{-1} \prod_{y < p \leq x/m} \frac{1+\frac{f(p)}{p}(1+\frac{1}{p})}{1+\frac{f(p)}{p}}. \end{equation}
Combining \eqref{eq:last}, \eqref{eq:sadiff}, \eqref{eq:saasymp}, and \eqref{eq:sbasymp}, we see that the $\liminf$ in \eqref{eq:liminfbit} is bounded below by
\begin{multline}\label{eq:crazybound} f(m) \cdot \liminf_{x\to\infty} \left(\frac{S(a_y; x/m)}{S(f;x)} - v\frac{\sigma(m)}{m} \frac{S(b_y; x/m)}{S(f;x)}\right) \\ =
\frac{f(m)}{m} \left(\prod_{p \leq y}\left(1+\frac{f(p)}{p}\right)^{-1}\right) \left(1-v\frac{\sigma(m)}{m}\prod_{p>y}\frac{1+\frac{f(p)}{p} (1+\frac{1}{p})}{1+\frac{f(p)}{p}}\right). \end{multline}
In the product over $p>y$, each term is at least $1$ but at most $1+f(p)/p^2 \leq 1 + O(1/p^2)$. Thus, that product tends to $1$ as $y\to\infty$. It follows that if we fix $y$ to be sufficiently large, then \eqref{eq:crazybound} is positive. This completes the proof. \end{proof}

\providecommand{\bysame}{\leavevmode\hbox to3em{\hrulefill}\thinspace}
\providecommand{\MR}{\relax\ifhmode\unskip\space\fi MR }
\providecommand{\MRhref}[2]{%
  \href{http://www.ams.org/mathscinet-getitem?mr=#1}{#2}
}
\providecommand{\href}[2]{#2}


\begin{thebibliography}{10}

\bibitem{billingsley95}
P.~Billingsley, \emph{Probability and measure}, third ed., Wiley Series in
  Probability and Mathematical Statistics, John Wiley \& Sons Inc., New York,
  1995.

\bibitem{davenport33}
H.~Davenport, \emph{\"{U}ber numeri abundantes}, S.-Ber. Preu\ss. Akad. Wiss.,
  math.-nat. Kl. (1933), 830--837.

\bibitem{delange61}
H.~Delange, \emph{Sur les fonctions arithm\'etiques multiplicatives}, Ann. Sci.
  \'Ecole Norm. Sup. (3) \textbf{78} (1961), 273--304.

\bibitem{elliott79}
P.~D.~T.~A. Elliott, \emph{Probabilistic number theory {I}: Mean-value
  theorems}, Grundlehren der Mathematischen Wissenschaften, vol. 239,
  Springer-Verlag, New York, 1979.

\bibitem{erdoswintner39}
P.~Erd\H{o}s and A.~Wintner, \emph{Additive arithmetical functions and
  statistical independence}, Amer. J. Math. \textbf{61} (1939), 713--721.

\bibitem{halasz68}
G.~Hal{\'a}sz, \emph{\"{U}ber die {M}ittelwerte multiplikativer
  zahlentheoretischer {F}unktionen}, Acta Math. Acad. Sci. Hungar. \textbf{19}
  (1968), 365--403.

\bibitem{HW08}
G.~H. Hardy and E.~M. Wright, \emph{An introduction to the theory of numbers},
  sixth ed., Oxford University Press, Oxford, 2008.

\bibitem{kobayashi10}
M.~Kobayashi, \emph{On the density of abundant numbers}, Ph.D. thesis,
  Dartmouth College, 2010.

\bibitem{kobayashi13}
\bysame, \emph{A new series for the density of the abundant numbers},
  submitted, 2013.

\bibitem{pillai40}
S.~S. Pillai, \emph{Generalisation of a theorem of {M}angoldt}, Proc. Indian
  Acad. Sci., Sect. A. \textbf{11} (1940), 13--20.

\bibitem{pollack}
P.~Pollack, \emph{Equidistribution mod $q$ of abundant and deficient numbers},
  submitted, 2013.

\bibitem{SS94}
W.~Schwarz and J.~Spilker, \emph{Arithmetical functions}, London Mathematical
  Society Lecture Note Series, vol. 184, Cambridge University Press, Cambridge,
  1994.

\bibitem{wirsing67}
E.~Wirsing, \emph{Das asymptotische {V}erhalten von {S}ummen \"uber
  multiplikative {F}unktionen. {II}}, Acta Math. Acad. Sci. Hungar. \textbf{18}
  (1967), 411--467.

\end{thebibliography}
\end{document}